\theoremstyle{plain}
\newtheorem{theorem}{Theorem}
\newtheorem{conjecture}{Conjecture}
\newtheorem{proposition}[theorem]{Proposition}
\newtheorem{problem}[theorem]{Problem}
\newtheorem{lemma}[theorem]{Lemma}
\newtheorem{corollary}{Corollary}
\theoremstyle{remark}
\theoremstyle{definition}
\renewcommand{\maketitle}{
	\begin{center}

		{\Large\bfseries \@title} \par
		\vspace{5mm}
		\baselineskip=0.2in
		{\large\bfseries \@author}\par
		\vspace{1mm}
		{\it \@address} \par
		{\small\tt \@email} \par
		\vspace{3mm}
		{\small (Received \@date)} \par
	\end{center}
	\vspace{3mm}
}
\newcommand{\address}[1]{\def\@address{#1}}
\newcommand{\email}[1]{\def\@email{#1}}
\newcommand{\acknowledgment}[1]{\vspace{5mm}\singlespacing
	{\noindent\textbf{\textit{Acknowledgement\/}:} #1}
}
\title{On a Conjecture Concerning the Complementary Second Zagreb Index}
\author{Hicham Saber$^a$, Tariq Alraqad$^a$, Akbar Ali$^{a,}$\footnote{Corresponding author.}, Abdulaziz M. Alanazi$^b$, Zahid Raza$^c$}
\address{$^a$Department of Mathematics,  College of Science,\\ University of Ha\!'il, Ha\!'il, Saudi Arabia\\
$^b$Department of Mathematics, Faculty of Sciences,\\
University of Tabuk, P.O. Box 741 Tabuk, Saudi Arabia\\    $^c$Department of Mathematics, College of
Sciences,\\ University of Sharjah, Sharjah, UAE
    }
\email{hi.saber@uoh.edu.sa, t.alraqad@uoh.edu.sa, akbarali.maths@gmail.com, am.alenezi@ut.edu.sa, zraza@sharjah.ac.ae}
\date{XXXX}
\begin{document}

\maketitle

\begin{abstract}

The
complementary second Zagreb index of a graph $G$ is defined as
$cM_2(G)=\sum_{uv\in E(G)}|(d_u(G))^2-(d_v(G))^2|$, where $d_u(G)$ denotes the degree of a vertex $u$ in $G$ and $E(G)$ represents the edge set of $G$. Let $G^*$ be a graph having the maximum value of $cM_2$ among all connected graphs of order $n$. Furtula and Oz [MATCH Commun. Math. Comput. Chem. 93 (2025) 247--263] conjectured that $G^*$ is the join $K_k+\overline{K}_{n-k}$ of the complete graph $K_k$ of order $k$ and the complement $\overline{K}_{n-k}$ of the complete graph $K_{n-k}$ such that the inequality $k<\lceil n/2 \rceil$ holds. We prove that (i) the maximum degree of $G^*$ is $n-1$ and (ii) no two vertices of minimum degree in $G^*$ are adjacent; both of these results support the aforementioned conjecture. We also prove that the number of vertices of maximum degree in $G^*$, say $k$, is at most  $-\frac{2}{3}n+\frac{3}{2}+\frac{1}{6}\sqrt{52n^2-132n+81}$, which implies that $k<5352n/10000$. Furthermore, we establish results that support the conjecture under consideration for certain bidegreed and tridegreed graphs. In the aforesaid paper, it was also mentioned that determining the $k$ as a function of
the $n$ is far from being an easy task; we obtain the values of $k$ for $5\le n\le 149$ in the case of certain bidegreed graphs by using computer software and found that the resulting sequence of the values of $k$ does not exist in
``The On-Line Encyclopedia of Integer Sequences'' (an online database of integer sequences).

\end{abstract}

\onehalfspacing

\section{Introduction}

Molecular descriptors offer a fundamental tool for the virtual screening of molecule libraries and for predicting the physicochemical characteristics of molecules \cite{Basak-book}.
According to Todeschini and Consonni \cite{Todeschini-20-book}, ``the final result of a logic and mathematical procedure which transforms
chemical information encoded within a symbolic representation of a molecule into a useful number or
the result of some standardized experiment'' is known as a molecular descriptor. Those molecular descriptors that are defined via the graph of a molecule are usually referred to as topological indices in
chemical graph theory \cite{Wagner-18,Trina-book}. (The graph-theoretical and chemical graph-theoretical terminology used in this study not defined here can be found in \cite{Bondy-book,Chartrand-16} and \cite{Wagner-18,Trina-book}, respectively.)
The readers interested in the chemical applications of topological indices are referred to the recent publications
\cite{Desmecht-JCIM-24,Leite-WIREs24}.

Those topological indices that are defined using the vertex degrees of graphs are commonly known as degree-based topological indices \cite{Gutman-13}. Gutman \cite{Gutman-21-MATCH} introduced a geometric approach to devise degree-based topological indices. This approach was extended in \cite{Gutman-24}. Recently, Furtula and Oz \cite{Furtula-MATCH-25} presented a new way of contemplating the concept of ``geometrical'' degree-based topological indices. Using the definition of the second Zagreb index (see \cite{Gutman-75,Borovicanin-17-MATCH}), they \cite{Furtula-MATCH-25} defined the so-called
``complementary second Zagreb index'' (CSZ index, in short). The CSZ index of a graph $G$ is defined \cite{Furtula-MATCH-25} by
$$cM_2(G)=\sum_{uv\in E(G)}|(d_u(G))^2-(d_v(G))^2|,$$
where $d_u(G)$ denotes the degree of a vertex $u$ in $G$ and $E(G)$ represents the set of edges of $G$. As observed in \cite{Furtula-MATCH-25}, the CSZ index was not introduced there for the first time. This index was
proposed independently in several recent papers under different names, including the nano Zagreb index, the F-minus index, the first Sombor index, and the modified Albertson index (see \cite{Furtula-MATCH-25}).

Let $H_1$ and $H_2$ be two graphs with disjoint vertex sets. The join of $H_1$ and $H_2$ is denoted by $H_1+H_2$ and is defined as a graph with the vertex set $V(H_1)\cup V(H_2)$ and edge set $$E(H_1)\cup E(H_2)\cup\{h_1h_2: h_1\in V(H_1), h_2\in V(H_2)\}.$$
Throughout this paper, whenever we use the notation or terminology concerning the join of two graphs, it would be understood that the graphs under consideration have disjoint vertex sets. The complete graph of order $n$ is denoted by $K_n$. The complement of a graph $G$ is represented as $\overline{G}$. In \cite{Furtula-MATCH-25}, the following conjecture was posed:

\begin{conjecture}\label{conj}
If $G^*$ is a graph having the maximum value of $cM_2$ among all connected graphs of order $n$ then $G^*$ is isomorphic to $K_k+\overline{K}_{n-k}$ for some $k$ satisfying $k<\lceil n/2 \rceil$, where $n\ge5$ and the graph $K_k+\overline{K}_{n-k}$ is shown in Figure \ref{Fig-01}.
\end{conjecture}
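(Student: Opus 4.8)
The plan is to take an arbitrary maximizer $G^*$ and, through local modifications that strictly increase $cM_2$, reduce its structure to a complete split graph, after which the value of $k$ is fixed by a one-variable optimization. The first milestone is to force a universal vertex. Let $u$ attain the maximum degree $\Delta$, suppose for contradiction that $\Delta<n-1$ so that some $w$ is non-adjacent to $u$, and pass to $G^*+uw$. Because $\Delta$ dominates every degree, each of the $\Delta$ edges at $u$ has its contribution raised from $\Delta^2-d_x^2$ to $(\Delta+1)^2-d_x^2$, a total gain of $\Delta(2\Delta+1)$, while the new edge contributes $(\Delta+1)^2-(d_w+1)^2\ge0$ and the $d_w$ edges at $w$ lose at most $d_w(2d_w+1)$. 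Since $t\mapsto t(2t+1)$ is increasing and $\Delta\ge d_w$, the net change is nonnegative; and a short case check---when $\Delta=d_w$ every neighbour of $w$ has degree $\le\Delta$, so those edges in fact also gain---makes it strictly positive, contradicting maximality and giving $\Delta(G^*)=n-1$.

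Next I would constrain the low-degree part. If two vertices $u,v$ of minimum degree $\delta$ were adjacent, the edge $uv$ contributes nothing, whereas deleting it (which, for $n\ge5$, forces $\delta\ge2$) sends each of the remaining edges at $u$ and at $v$ to a vertex of degree $\ge\delta$ and so raises its term by $2\delta-1$; provided the deletion keeps $G^*$ connected this is again a strict improvement, which is statement (ii). To move toward a threshold graph I would then exploit degree-preserving two-switches: replacing a pair $\{xa,yb\}$ by $\{xb,ya\}$ alters only the four incident terms, so an obstruction to nested neighbourhoods---an induced $2K_2$, $C_4$, or $P_4$---can be switched whenever $|d_x^2-d_b^2|+|d_y^2-d_a^2|$ exceeds $|d_x^2-d_a^2|+|d_y^2-d_b^2|$. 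Iterating this should confine $G^*$ to the class of threshold graphs, each determined by a creation sequence of dominating and isolated vertices.

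The crux---and the step at which I expect a clean general argument to stall---is to collapse a threshold graph carrying several distinct degrees down to just the two values $n-1$ and $k$. The difficulty is intrinsic to the absolute value: relocating a single vertex between degree classes changes all of its incident terms at once and can flip the sign of several of them simultaneously, so the variation of $cM_2$ is not monotone in any one parameter and the effect of a promotion or demotion depends delicately on the global degree profile. This is precisely why the bidegreed and tridegreed restrictions are worth isolating as evidence: there the optimization is one- or two-dimensional and tractable, whereas the general collapse demands showing that merging each intermediate degree class into one of the two extremes never decreases the index. Granting that reduction, the maximizer is $K_k+\overline{K}_{n-k}$ with $cM_2=k(n-k)\bigl((n-1)^2-k^2\bigr)$; maximizing this over integers $1\le k\le n-1$ and verifying that its optimizer satisfies $k<\lceil n/2\rceil$---consistent with the sharper bound $k<5352n/10000$ proved in the paper---would complete the argument.
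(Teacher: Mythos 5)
You set out to prove Conjecture~\ref{conj}, but note first that the paper contains no proof of it: the authors pose it as open and establish only supporting facts, namely that $\Delta(G^*)=n-1$ (Proposition~\ref{l01}), that no two minimum-degree vertices are adjacent (Proposition~\ref{l02}), an upper bound on $|M(G^*)|$ (Proposition~\ref{l03}), and the conjecture restricted to bidegreed and certain tridegreed graphs with maximum degree $n-1$ (Propositions~\ref{l05} and~\ref{l06}). Your first two milestones are sound and essentially reproduce Propositions~\ref{l01} and~\ref{l02}: your accounting for adding $uw$ matches the paper's computation \eqref{Eq-001} with a coarser but sufficient estimate, and your edge-deletion step is fine once connectivity is secured, which follows from the universal vertex guaranteed by the first step (the universal vertex cannot be $x$ or $y$, since $\delta=n-1$ would force $G^*=K_n$ with $cM_2=0$).

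Beyond that point, however, there are two genuine gaps, the second of which you candidly flag yourself. First, the two-switch reduction to threshold graphs does not go through as stated: a degree-preserving switch changes only the four terms you list, and maximality of $G^*$ only tells you that every admissible switch is \emph{non-increasing}; nothing guarantees a strictly increasing switch at each induced $2K_2$, $C_4$, or $P_4$. For instance, when $d_x=d_y$ and $d_a=d_b$ the change $\bigl(|d_x^2-d_b^2|+|d_y^2-d_a^2|\bigr)-\bigl(|d_x^2-d_a^2|+|d_y^2-d_b^2|\bigr)$ is exactly zero, so such obstructions cannot be removed by your criterion, iteration need not terminate at a threshold graph without a tie-breaking potential argument, and a switch may also destroy connectivity. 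Second, the collapse of a threshold (or multi-degreed) maximizer onto the two degree values $n-1$ and $k$ is precisely the missing crux; the paper handles it only in the bidegreed case via the explicit formula in \eqref{eq-004} and in the tridegreed case with $d_3=n-1$ via Lemma~\ref{lem-5}, and no mechanism in your proposal supplies the general merging step. So what you have is a programme consistent with, and partially overlapping, the paper's partial results---not a proof; the conjecture remains open, and even the final one-variable optimization (which would give $3n/10<k<4n/10$ as in Corollary~\ref{cor-Kr}, stronger than $k<\lceil n/2\rceil$) is conditional on the unproved reduction.
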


\vspace{-4mm}
\begin{figure}[!ht]
 \centering
\includegraphics[width=0.25\textwidth]{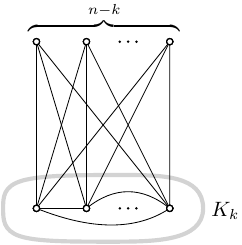}
   \caption{The graph $K_k+\overline{K}_{n-k}$.}
    \label{Fig-01}
     \end{figure}

In \cite{Furtula-MATCH-25}, the following problem was also posed:

\begin{problem}\label{prob}
Assuming that Conjecture \ref{conj} is true,
determine $k$ as a function of
$n$.
\end{problem}

In \cite{Furtula-MATCH-25}, it was mentioned that solving Problem \ref{prob} is ``far from being an easy task''. In the present paper, we provide some results concerning the solutions to Conjecture \ref{conj} and Problem \ref{prob}. More precisely,
we prove that (i) the maximum degree of the graph $G^*$ defined in Conjecture \ref{conj} is $n-1$ and (ii) no two vertices of minimum degree in $G^*$ are adjacent; both of these results support the Conjecture \ref{conj}. We also prove that the number of vertices of maximum degree in $G^*$ is at most $$-\frac{2}{3}n+\frac{3}{2}+\frac{1}{6}\sqrt{52n^2-132n+81}.$$ Furthermore, we establish results that support Conjecture \ref{conj} for certain bidegreed and tridegreed graphs. Concerning Problem \ref{prob}, we obtain the values of $k$ for $5\le n\le 149$ in the case of certain bidegreed graphs by using computer software and found that the resulting sequence of the values of $k$ does not exist in
``The On-Line Encyclopedia of Integer Sequences''  \cite{OLEIS}.

\section{Results}

By an $n$-order graph, we mean a graph of order $n$. For a graph $G$ and a vertex $u\in V(G)$, let $N_G(u)$ denote the set of vertices adjacent to $u$.

\begin{proposition}\label{l01}
If $G$ is a graph having the maximum value of $cM_2$ among connected $n$-order graphs, $n\ge4$, then the maximum degree of $G$ is $n-1$.
\end{proposition}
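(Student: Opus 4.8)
The plan is to argue by contradiction using a single edge insertion, exploiting the fact that adding an edge to a connected graph keeps it connected. Suppose, to the contrary, that the maximum degree $\Delta$ of $G$ satisfies $\Delta\le n-2$. I would fix a vertex $u$ with $d_u(G)=\Delta$; since $\Delta\le n-2$, there is a vertex $w\notin N_G(u)\cup\{u\}$. Let $G'=G+uw$ be obtained by joining $u$ and $w$. Then $G'$ is a connected $n$-order graph, so by the maximality of $G$ it suffices to show $cM_2(G')>cM_2(G)$ to reach a contradiction; the entire argument thus reduces to estimating this single difference.

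First I would record that inserting $uw$ raises exactly the two degrees of $u$ and $w$ by one and leaves all other degrees unchanged. Partitioning the edges of $G'$ into (a) the new edge $uw$, (b) the edges of $G$ incident to $u$, (c) the edges of $G$ incident to $w$, and (d) the remaining edges, I note that class (d) contributes nothing to the difference and that classes (b) and (c) are disjoint because $u$ and $w$ are non-adjacent in $G$. For class (b), every neighbour $x$ of $u$ has $d_x(G)\le\Delta$, so each term $|\Delta^2-d_x^2|=\Delta^2-d_x^2$ becomes $(\Delta+1)^2-d_x^2$; summing the per-edge gain $2\Delta+1$ over the $\Delta$ edges gives a contribution of exactly $\Delta(2\Delta+1)$. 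The new edge in class (a) contributes the nonnegative quantity $|(\Delta+1)^2-(d_w+1)^2|$.

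I expect the main obstacle to be class (c): raising the degree of $w$ can actually decrease the contribution of an edge $wy$ when $d_y(G)>d_w$, so this class may be negative. A short case analysis of $|(d_w+1)^2-d_y^2|-|d_w^2-d_y^2|$ shows that each such edge changes the sum by $+(2d_w+1)$ when $d_w\ge d_y$ and by $-(2d_w+1)$ when $d_w<d_y$; hence the class (c) contribution is at least $-d_w(2d_w+1)$. Writing $g(t)=t(2t+1)$, which is strictly increasing for $t\ge0$, the combined contribution of classes (b) and (c) is at least $g(\Delta)-g(d_w)$. Since $d_w\le\Delta$, this is strictly positive whenever $d_w<\Delta$; and when $d_w=\Delta$ every neighbour $y$ of $w$ satisfies $d_y\le\Delta=d_w$, so class (c) in fact equals $g(d_w)>0$. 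In either case classes (b) and (c) together are strictly positive, and adding the nonnegative class (a) yields $cM_2(G')>cM_2(G)$, the desired contradiction. The only delicate point is the sign bookkeeping in class (c), and it is resolved by the elementary monotonicity of $g$ together with the maximality of $\Delta$.
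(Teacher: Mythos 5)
Your proposal is correct and follows essentially the same route as the paper: a contradiction via inserting one edge between a maximum-degree vertex and a non-neighbour, decomposing the change in $cM_2$ over the edges at the two endpoints plus the new edge, and handling the possibly negative contribution at the lower-degree endpoint by the worst-case bound $-d_w(2d_w+1)$ (the paper's equivalent step is $d_u-2|N_1|\ge -d_u$ via the set $N_1$ of higher-degree neighbours), with the same case split $d_w=\Delta$ versus $d_w<\Delta$. Your monotonicity argument with $g(t)=t(2t+1)$ is a slightly cleaner packaging of the same estimate, not a different method.
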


\begin{proof}
Let $\Delta$ be the maximum degree of $G$. We assume that $\Delta<n-1$ and seek a contradiction. We pick a vertex $v\in V(G)$ of degree $\Delta$ and choose another vertex $u\in V(G)$ that is not adjacent to $v$. We also define $N_1:=\{y\in N_G(u):d_y(G)> d_u(G)\}$. Let $G'$ be the graph obtained from $G$ by adding the edge $uv$. In the rest of the proof, we take $d_s=d_s(G)$ for every vertex $s\in V(G')=V(G)$.
Then we have
\begin{align}
cM_2(G')-cM_2(G)=
&\sum_{w\in N_G(v)}\left[((\Delta+1)^2-d_w^2)-(\Delta^2-d_w^2)\right]\nonumber\\[2mm]
&+\sum_{x\in N_G(u)\setminus N_1}\left[((d_u+1)^2-d_x^2)-(d_u^2-d_x^2)\right]\nonumber\\[2mm]
&+\sum_{y\in N_1}\left[(d_y^2-(d_u+1)^2)-(d_y^2-d_u^2)\right]\nonumber\\[2mm]
&+(\Delta+1)^2-(d_u+1)^2\nonumber\\[2mm]
=&\Delta(2\Delta+1)+(d_u-|N_1|)(2d_u+1)\nonumber\\[2mm]
&-|N_1|(2d_u+1)+(\Delta+1)^2-(d_u+1)^2\nonumber\\[2mm]
=&\Delta(2\Delta+1)+(d_u-2|N_1|)(2d_u+1)\nonumber\\[2mm]
&+(\Delta+1)^2-(d_u+1)^2.\label{Eq-001}
\end{align}
If $d_u=\Delta$, then $|N_1|=0$, and hence Equation \eqref{Eq-001} yields
$$cM_2(G')-cM_2(G)=2\Delta(2\Delta+1)>0,$$
a contradiction to the maximality of $cM_2(G)$.
Next, we assume that the inequality $d_u<\Delta$ holds. Since $|N_1|\leq d_u$, we get
$$d_u-2|N_1|\geq -|N_1|\geq-d_u> -\Delta.$$ Hence, Equation \eqref{Eq-001} yields
\begin{align*}
cM_2(G')-cM_2(G)>&\,\Delta(2\Delta+1)-\Delta(2d_u+1)+(\Delta+1)^2-(d_u+1)^2\\
=&\,2\Delta(\Delta-d_u)+(\Delta+1)^2-(d_u+1)^2>0,
\end{align*}
again a contradiction.
Therefore, the maximum degree of $G$ is $n-1$.
\end{proof}

\begin{proposition}\label{l02}
If $G$ is a graph having the maximum value of $cM_2$ among all connected $n$-order graphs with $n\ge4$, then no two vertices of minimum degree in $G$ are adjacent.
\end{proposition}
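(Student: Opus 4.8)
The plan is to argue by contradiction using an edge-\emph{removal} operation, dual to the edge-addition argument of Proposition \ref{l01}. Suppose, contrary to the assertion, that $u$ and $v$ are two adjacent vertices of minimum degree $\delta$ in $G$, and let $G'=G-uv$ be the graph obtained by deleting the edge $uv$. As in the previous proof I would write $d_s=d_s(G)$ throughout and compute $cM_2(G')-cM_2(G)$ by tracking only the edges whose contribution changes, namely those incident with $u$ or with $v$.

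First I would dispose of the edge $uv$ itself: since $d_u=d_v=\delta$, it contributes $|d_u^2-d_v^2|=0$ in $G$ and is simply absent in $G'$, so it has no net effect. Next, for every neighbour $x$ of $u$ with $x\neq v$ one has $d_x\ge\delta$ because $\delta$ is the minimum degree, so the term for the edge $ux$ equals $d_x^2-\delta^2$ in $G$ and $d_x^2-(\delta-1)^2$ in $G'$; the gain is exactly $2\delta-1$. There are $\delta-1$ such edges at $u$ and, symmetrically, $\delta-1$ at $v$, and these $2(\delta-1)$ edges are pairwise distinct. Summing gives
\begin{equation*}
cM_2(G')-cM_2(G)=2(\delta-1)(2\delta-1),
\end{equation*}
which is strictly positive as soon as $\delta\ge2$. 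The boundary case $\delta=1$ cannot arise, since two adjacent vertices of degree $1$ would form a $K_2$-component, contradicting the connectedness of $G$ for $n\ge4$.

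The main obstacle is not this computation but ensuring that $G'$ remains connected, because $cM_2(G)$ is maximal only within the class of connected graphs and a strict increase is useless if $G'$ leaves that class. I would resolve this by invoking Proposition \ref{l01}: the graph $G$ contains a vertex $z$ of degree $n-1$. Since the maximizer satisfies $cM_2(G)>0$ while $cM_2(K_n)=0$, the graph $G$ is not complete, so the minimum-degree vertices $u$ and $v$ have degree at most $n-2$, and in particular $z\notin\{u,v\}$. Being universal, $z$ is adjacent to both $u$ and $v$, so $uzv$ is a path avoiding the edge $uv$; hence $uv$ is not a bridge and $G'=G-uv$ is connected. Consequently $G'$ is a connected $n$-order graph with $cM_2(G')>cM_2(G)$, contradicting the maximality of $cM_2(G)$ and completing the argument.
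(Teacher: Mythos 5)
Your proof is correct and takes essentially the same route as the paper's: delete the edge between the two adjacent minimum-degree vertices, observe that the edge itself contributes $0$ since $d_u=d_v=\delta$, and compute the gain $2(\delta-1)(2\delta-1)>0$ for $\delta\ge2$ over the $2(\delta-1)$ remaining incident edges. The only difference is that you additionally verify that $G'=G-uv$ stays connected (via the universal vertex from Proposition \ref{l01}) and that $\delta\ge2$; the paper asserts $\delta\ge2$ without comment and silently skips the connectivity check, so your version is in fact slightly more complete than the published one.
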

\begin{proof}
Let $\delta$ be the minimum degree of $G$. Contrarily, we suppose that $x,y\in V(G)$ are adjacent vertices of degree $\delta$. Since $n\ge4$, we have $\delta\ge2$. If $G'$ is the graph obtained from $G$ by removing the edge $xy$, then we have
\begin{align*}
cM_2(G')-cM_2(G)=&\sum_{u\in N(x)\setminus \{y\}}\left[(d_u^2-(d_x-1)^2)-(d_u^2-d_x^2)\right]\\[2mm]
&+\sum_{v\in N(y)\setminus \{x\}}\left[(d_v^2-(d_y-1)^2)-(d_v^2-d_y^2)\right]\\[2mm]
=&2(\delta-1)(2\delta-1)>0,
\end{align*}
a contradiction to the maximality of $cM_2(G)$, where we used the notation $d_s=d_s(G)$ for every vertex $s\in V(G')=V(G)$. Thus, no two vertices of minimum degree in $G$ are adjacent.
\end{proof}

We remark here that Propositions \ref{l01} and \ref{l02} support Conjecture \ref{conj}. Next, we derive an upper bound on the number of vertices of maximum degree in a graph having the maximum value of $cM_2$ among all connected $n$-order graphs. For a graph $G$, denote by $M(G)$ the set of those vertices of $G$ that have the maximum degree.

\begin{proposition}\label{l03}
If $G$ is a graph having the maximum value of $cM_2$ among all connected $n$-order graphs with $n\ge4$, then
$$|M(G)|\leq -\frac{2}{3}n+\frac{3}{2}+\frac{1}{6}\sqrt{52n^2-132n+81}.$$
\end{proposition}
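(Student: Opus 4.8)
The plan is to exploit the maximality of $cM_2(G)$ through a single edge deletion, exactly as in Propositions \ref{l01} and \ref{l02}, but now feeding both of those conclusions into one computation. By Proposition \ref{l01} the maximum degree equals $n-1$, so every vertex of $M(G)$ is adjacent to all other vertices; write $k=|M(G)|$ and let $\delta$ be the minimum degree of $G$, attained at a vertex $u$. Since $u$ is adjacent to each of the $k$ maximum-degree vertices, we have $\delta\ge k$, and this inequality is what will eventually convert a bound phrased in $\delta$ into a bound phrased in $k$. I would fix a maximum-degree vertex $v$; because $v$ is adjacent to everything, $vu\in E(G)$, and I would set $G'=G-vu$, which is connected whenever $k\ge2$ (the case $k\le1$ is trivial, as the claimed bound exceeds $1$ for all $n\ge4$). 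As usual I write $d_s=d_s(G)$ for $s\in V(G')=V(G)$.

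Next I would compute $cM_2(G')-cM_2(G)$ by grouping the affected edges, which are exactly those incident to $v$ or to $u$, since deleting $vu$ lowers $d_v$ from $n-1$ to $n-2$ and $d_u$ from $\delta$ to $\delta-1$. The $k-1$ edges from $v$ to the other maximum-degree vertices each rise from $0$ to $2n-3$, while the $n-k-1$ edges from $v$ to the remaining non-maximum vertices $w$ each drop by $2n-3$ (here I use $d_w\le n-2$ to remove the absolute value), contributing $(2n-3)(2k-n)$ in total. The $k-1$ edges from $u$ to the other maximum-degree vertices each gain $2\delta-1$, and—this is the step where Proposition \ref{l02} is indispensable—every neighbour of $u$ lying outside $M(G)$ has degree strictly larger than $\delta$, so the absolute values on those $\delta-k$ edges resolve with a fixed sign and each of them also gains $2\delta-1$; together with the contribution $\delta^2-(n-1)^2$ of the deleted edge itself this accounts for $\delta^2-(n-1)^2+(2\delta-1)(\delta-1)$. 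Summing the two groups I expect
\begin{equation*}
cM_2(G')-cM_2(G)=3\delta^2-3\delta+1-(n-1)^2+(2n-3)(2k-n).
\end{equation*}

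Finally, since $\delta\ge k\ge1$ and $t\mapsto 3t^2-3t$ is increasing for $t\ge\tfrac12$, replacing $\delta$ by $k$ can only decrease the right-hand side, so
\begin{equation*}
cM_2(G')-cM_2(G)\ge 3k^2+(4n-9)k-3n^2+5n.
\end{equation*}
Maximality of $cM_2(G)$ forces the left-hand side to be at most $0$, whence $3k^2+(4n-9)k-3n^2+5n\le0$; because the constant term $-3n^2+5n$ is negative this quadratic in $k$ has exactly one positive root, and solving gives precisely $k\le-\frac{2}{3}n+\frac{3}{2}+\frac{1}{6}\sqrt{52n^2-132n+81}$. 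I expect the one genuinely delicate point to be the sign analysis of the edges joining $u$ to the vertices of $V(G)\setminus M(G)$: without Proposition \ref{l02} one cannot guarantee that those neighbours have degree exceeding $\delta$, and the clean closed form for the difference would collapse. The substitution $\delta\ge k$ is the other load-bearing step, and it is exactly what lets the final bound be stated purely in terms of $k$ and $n$.
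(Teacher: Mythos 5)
Your proposal is correct and is essentially the paper's own proof: you delete the same edge joining a maximum-degree vertex to a minimum-degree vertex, arrive at the identical expression $(2k-n)(2n-3)+(\delta-1)(2\delta-1)+\delta^2-(n-1)^2$ for the change in $cM_2$, substitute $\delta\ge k$ by monotonicity, and solve the same quadratic inequality $3k^2+(4n-9)k-3n^2+5n\le 0$ (your explicit check that $G'$ stays connected for $k\ge2$ is a small point the paper glosses over). One aside is inaccurate, though: Proposition \ref{l02} is not actually indispensable, since every neighbour $w$ of the minimum-degree vertex satisfies $d_w\ge\delta>\delta-1$, so each such edge gains exactly $2\delta-1$ even when $d_w=\delta$ (the term merely moves from $|d_w^2-\delta^2|=0$ to $2\delta-1$), which is why the paper's proof never invokes that proposition.
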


\begin{proof}
Let $k=|M(G)|$. By Proposition \ref{l01}, we have $d_u(G)=n-1$ for every $u\in M(G)$. We choose an edge $xy\in E(G)$ in such a way that the vertex $x$ has the maximum degree and the vertex $y$ has the minimum degree. Let $G'$ denote the graph obtained from $G$ by removing the edge $xy$. In the rest of the proof, we take $d_s=d_s(G)$ for every vertex $s\in V(G')=V(G)$. Now, we have
\begin{align}
0\geq &\,cM_2(G')-cM_2(G)\nonumber\\
=&\sum_{u\in M(G)\setminus \{x\}}\left[(n-1)^2-(n-2)^2\right]\nonumber\\
&+\sum_{v\in N_G(x)\setminus (M(G)\cup \{y\})}\left[((n-2)^2-d_v^2)-((n-1)^2-d_v^2)\right]\nonumber\\
&+\sum_{w\in N_G(y)\setminus \{x\}}\left[((d_w)^2-(d_y-1)^2)-(d_w^2-d_y^2)\right]-((n-1)^2-d_y^2)\nonumber\\
=&(k-1)(2n-3)+(n-k-1)(3-2n)+(d_y-1)(2d_y-1)\nonumber\\
&+d_y^2-(n-1)^2\nonumber\\
=&(2k-n)(2n-3)+(d_y-1)(2d_y-1)+d_y^2-(n-1)^2.\label{Eq-002}
\end{align}
Since $d_y\ge |M(G)|=k$, the expression given on the right most of \eqref{Eq-002} is greater than or equal to
\[
(2k-n)(2n-3)+(k-1)(2k-1)+k^2-(n-1)^2,
\]
which is equal to $3k^2+(4n-9)k-3n^2+5n$.
Therefore, from \eqref{Eq-002}, we have $$3k^2+(4n-9)k-3n^2+5n\leq 0,$$
which implies that the product of the expressions
$$k+\frac{2}{3}n-\frac{3}{2}+\frac{1}{6}\sqrt{52n^2-132n+81}$$
and
$$k+\frac{2}{3}n-\frac{3}{2}-\frac{1}{6}\sqrt{52n^2-132n+81}$$
is less than or equal to $0$.
Therefore, we have $$k+\frac{2}{3}n-\frac{3}{2}-\frac{1}{6}\sqrt{52n^2-132n+81}\le 0,$$ which gives the desired bound on $|M(G)|$.
\end{proof}

Since, for $n\ge4$, the inequality
$$-\frac{2}{3}n+\frac{3}{2}+\frac{1}{6}\sqrt{52n^2-132n+81}<\frac{5352}{10000}n$$
holds, from Proposition \ref{l03}, the next result follows.

\begin{corollary}
If $G$ is a graph having the maximum value of $cM_2$ among all connected $n$-order graphs with $n\ge4$, then
\begin{equation*}\label{eq-bound}
 |M(G)|<\frac{5352}{10000}n.
\end{equation*}
\end{corollary}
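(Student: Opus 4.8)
The plan is to deduce the Corollary directly from Proposition~\ref{l03}: it suffices to establish the purely numerical inequality
$$-\frac{2}{3}n+\frac{3}{2}+\frac{1}{6}\sqrt{52n^2-132n+81}<\frac{5352}{10000}n \qquad (n\ge4),$$
since chaining it with the upper bound of Proposition~\ref{l03} yields the assertion at once. Thus the real content is an elementary estimate on a single explicit function of $n$, and the strategy is to eliminate the square root by squaring, after first arranging that both sides are nonnegative.

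First I would rewrite the target inequality by moving the non-radical terms to the right, obtaining $\frac{1}{6}\sqrt{52n^2-132n+81}<\tfrac{4507}{3750}\,n-\tfrac{3}{2}$, where $\tfrac{4507}{3750}=\tfrac{5352}{10000}+\tfrac{2}{3}$. The right-hand side is increasing in $n$ and already positive at $n=4$, hence positive throughout the range $n\ge4$; this legitimises squaring. Squaring and multiplying through by $36$ turns the inequality into a quadratic comparison in $n$. The additive constant $81$ cancels on both sides, and after dividing by $n>0$ the whole thing collapses to the linear inequality
$$n\bigl(52-36a^2\bigr)<132-108a, \qquad a:=\tfrac{4507}{3750}.$$

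The crux --- and the only step that is not bookkeeping --- is the sign of the coefficient $52-36a^2$. I would show $52-36a^2<0$, i.e.\ $a>\tfrac{\sqrt{13}}{3}$, equivalently $\bigl(\tfrac{4507}{1250}\bigr)^2>13$; this is the exact rational check $4507^2=20313049>20312500=13\cdot 1250^2$. The margin here is genuinely razor-thin, reflecting that the leading coefficient of the Proposition~\ref{l03} bound is precisely $\tfrac{\sqrt{13}-2}{3}\approx 0.535185$, just below $\tfrac{5352}{10000}=0.5352$. Once $52-36a^2<0$ is secured, the left-hand side of the linear inequality is negative for every $n>0$, while the right-hand side $132-108a$ is positive (since $108a=\tfrac{486756}{3750}<\tfrac{495000}{3750}=132$); hence the inequality holds for all $n\ge4$, completing the argument. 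The main obstacle is therefore purely this last numerical comparison: any rational constant appreciably smaller than $0.5352$ would make the claim fail for large $n$, so the verification must be carried out with exact integers rather than decimal approximations.
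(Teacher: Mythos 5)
Your proposal is correct and follows exactly the paper's route: the corollary is obtained by chaining the bound of Proposition~\ref{l03} with the numerical inequality $-\frac{2}{3}n+\frac{3}{2}+\frac{1}{6}\sqrt{52n^2-132n+81}<\frac{5352}{10000}n$ for $n\ge4$, which the paper merely asserts. Your contribution beyond the paper is to actually verify that inequality by squaring (justified since $\tfrac{4507}{3750}n-\tfrac{3}{2}>0$ for $n\ge4$) and reducing to the exact integer comparison $4507^2=20313049>20312500=13\cdot1250^2$, all of which checks out, including the observation that the margin is tight because $\tfrac{\sqrt{13}-2}{3}\approx0.535185$ sits just below $0.5352$.
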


The primary motivation of establishing Proposition \ref{l03} and its corollary is the inequality $|M(G)|<\lceil n/2 \rceil$ given in Conjecture \ref{conj}.

The degree set of a graph $G$ is the set of all distinct degrees of the vertices of $G$. By an $\ell$-degreed graph, we mean a graph having a degree set consisting of exactly $\ell$ elements. If $\ell=1,2,$ or $3,$ then the corresponding $\ell$-degreed graph is called a regular graph, a bidegreed graph, or a tridegreed graph, respectively.

Since the extremal graph mentioned in Conjecture \ref{conj} is a bidegreed graph and because the maximum degree of this extremal graph is $n-1$ by Proposition \ref{l03}, we next prove Conjecture \ref{conj} for the class of $n$-order bidegreed connected graphs with maximum degree $n-1$. Also, in the next result, we assume that $n\ge11$ because the required extremal graphs are already known for $n\le 10$; see \cite{Furtula-MATCH-25}.

\begin{proposition}\label{l05}
Let $G$ be a connected bidegreed $n$-order graph of maximum degree $n-1$, with $n\ge11$ and $|M(G)|=k$. Then
\begin{equation}\label{eq-bideg-new}
cM_2(G)\leq k(n-k)\left((n-1)^2-k^2\,\right),
\end{equation}
with equality if and only if $G=K_k+\overline{K}_{n-k}$. Also, the inequality
\begin{equation}\label{eq-new-prop4}
cM_2(G)\le \epsilon\, n^2(1-\epsilon)\left((n-1)^2-\epsilon^2n^2\,\right)
\end{equation}
holds for some $\epsilon$ lying between $372/1000$ and $392/1000$.
\end{proposition}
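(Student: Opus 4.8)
The plan is to begin by pinning down the structure of any admissible $G$. Since the maximum degree is $n-1$, every vertex in $M(G)$ is adjacent to all the remaining $n-1$ vertices; and as $G$ is bidegreed, the other $n-k$ vertices all share the minimum degree $\delta$. Each such vertex is already adjacent to the $k$ vertices of $M(G)$, so it has exactly $\delta-k$ neighbours among the remaining $n-k-1$ vertices of minimum degree. Writing $r:=\delta-k$, this forces the subgraph $H$ induced on $V(G)\setminus M(G)$ to be $r$-regular, and hence $G=K_k+H$ with $H$ an $r$-regular graph on $n-k$ vertices. I would record this as the first step, because everything else follows from it.

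With this structure in hand, I would compute $cM_2(G)$ edge-type by edge-type. The edges inside $K_k$ join two vertices of degree $n-1$ and contribute $0$; the edges inside $H$ join two vertices of degree $\delta=k+r$ and also contribute $0$; only the $k(n-k)$ join edges contribute, each giving $(n-1)^2-(k+r)^2$. Hence
$$cM_2(G)=k(n-k)\bigl((n-1)^2-(k+r)^2\bigr).$$
Since $(k+r)^2\ge k^2$ with equality exactly when $r=0$, inequality \eqref{eq-bideg-new} follows at once, with equality if and only if $H=\overline{K}_{n-k}$, that is, $G=K_k+\overline{K}_{n-k}$. This disposes of the first assertion cleanly; no optimisation is needed beyond the trivial monotonicity in $r$.

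For the second assertion I would pass from the discrete parameter $k$ to the continuous variable $\epsilon=k/n$. Setting $g(\epsilon):=\epsilon n^2(1-\epsilon)\bigl((n-1)^2-\epsilon^2n^2\bigr)$, one checks directly that $g(k/n)$ equals the right-hand side of \eqref{eq-bideg-new}; thus it suffices to bound $cM_2(G)$ by $\max_{\epsilon\in[0,1]}g(\epsilon)$ and to locate the maximiser. Writing $h(\epsilon)=g(\epsilon)/n^2$, I observe that $h$ is a quartic with the four distinct real roots $-(n-1)/n,\,0,\,(n-1)/n,\,1$, so by Rolle's theorem it has a unique critical point in the interval $\bigl(0,(n-1)/n\bigr)$, where $h>0$; since $h\le0$ on $\bigl[(n-1)/n,1\bigr]$, this critical point is the global maximiser $\epsilon_0$ on $[0,1]$. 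I would then evaluate the cubic $h'(\epsilon)=4n^2\epsilon^3-3n^2\epsilon^2-2(n-1)^2\epsilon+(n-1)^2$ at the endpoints $\epsilon=372/1000$ and $\epsilon=392/1000$, grouping terms as $n^2(4\epsilon^3-3\epsilon^2)+(n-1)^2(1-2\epsilon)$ so that each reduces to an explicit quadratic in $n$. Showing $h'(372/1000)>0$ and $h'(392/1000)<0$ for every $n\ge11$ then traps $\epsilon_0$ in the required interval, whence \eqref{eq-new-prop4} holds with $\epsilon=\epsilon_0$.

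The main obstacle is precisely this last verification: the location of $\epsilon_0$ depends on $n$ (in the limit $n\to\infty$ it approaches the root $(\sqrt{17}-1)/8\approx0.3904$ of $4\epsilon^2+\epsilon-1$), so I must confirm that the relevant sign pattern of $h'$ persists uniformly for all $n\ge11$ rather than only asymptotically. After the grouping above, $h'(372/1000)$ becomes a quadratic in $n$ whose larger root is just below $11$, so its positivity holds precisely from $n=11$ onward; this boundary behaviour is exactly why the hypothesis $n\ge11$ is imposed, and checking it carefully is the delicate point. By contrast, verifying that no competing critical point of $h$ on $[0,1]$ can overtake $h(\epsilon_0)$ is routine, since it is immediate from the root structure of the quartic together with $h\le0$ beyond $(n-1)/n$.
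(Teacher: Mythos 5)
Your proposal is correct and takes essentially the same route as the paper: both reduce to $cM_2(G)=k(n-k)\big((n-1)^2-\delta^2\big)$ with $\delta\ge k$ (your observation that $G=K_k+H$ with $H$ regular is the same edge-count made structural), and both then maximize the single-variable function $x\mapsto x(n-x)\big((n-1)^2-x^2\big)$ by trapping the sign change of its derivative between roughly $0.372n$ and $0.392n$. The only difference is technical polish: the paper verifies the derivative's sign on the full subintervals $[1,\tfrac{373}{1000}n]$ and $[\tfrac{391}{1000}n,\,n-2]$, whereas you obtain uniqueness of the critical point from Rolle's theorem applied to the quartic's four roots and then need only two endpoint sign evaluations (both of which check out for $n\ge11$, failing at $n=10$ exactly as you note).
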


\begin{proof}
If $\delta$ is the minimum degree of $G$, then $n-2\ge\delta\geq k\ge 1$ and hence we have
\begin{align}\label{eq-004}
cM_2(G)=k(n-k)((n-1)^2-\delta^2)\leq k(n-k)\left((n-1)^2-k^2\,\right),
\end{align}
where the right equality in \eqref{eq-004} holds if and only if $\delta=k$; this completes the proof of \eqref{eq-bideg-new}. Next,
we define the function $f$ as $$f(x,y)=x(y-x)((y-1)^2-x^2)$$ for real variables $x$ and $y$ satisfying $y\ge x+2 \ge3$ and $y\ge11$. Then $$\frac{\partial f}{\partial x}=4x^3-3yx^2-2(y-1)^2x+y(y-1)^2.$$
If $1 \le x \le \frac{373}{1000} y$ and $y\ge11$, then we note that $\frac{\partial f}{\partial x}>0$. Also, for the case when $\frac{391}{1000} y \le x \le y-2$ and $y\ge11$, then we have $\frac{\partial f}{\partial x}<0$. Hence, for every fix $y\geq 11$, $f(x,y)$ attain its maximum value over the interval $[1,y-2]$ at some $x$ lying between $\frac{372}{1000}y$ and $\frac{392}{1000}y$. Now, \eqref{eq-new-prop4} follows from \eqref{eq-004}.
\end{proof}

Since the difference $\frac{4}{10}n-\frac{3}{10}n$ is strictly greater than $1$ for $n\ge11$, from the proof of Proposition \ref{l05} the next result follows.

\begin{corollary}\label{cor-Kr}
If $G$ is a graph having the maximum value of $cM_2$ over the class $\{K_k+\overline{K}_{n-k}: 1\le k\le n-2, \, n\ge11\}$, then  $\frac{3}{10}n < k < \frac{4}{10}n$.
\end{corollary}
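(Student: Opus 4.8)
The plan is to reduce the corollary to the one-variable optimisation already set up in the proof of Proposition \ref{l05}. For $G=K_k+\overline{K}_{n-k}$ the minimum degree $\delta$ equals $k$, so the equality case of \eqref{eq-004} gives $cM_2(K_k+\overline{K}_{n-k})=f(k,n)$, where $f(x,n)=x(n-x)\bigl((n-1)^2-x^2\bigr)$. Hence maximising $cM_2$ over the class $\{K_k+\overline{K}_{n-k}:1\le k\le n-2\}$ is precisely the problem of maximising the integer-restricted function $k\mapsto f(k,n)$ on $\{1,\dots,n-2\}$, and the assertion $\tfrac{3}{10}n<k<\tfrac{4}{10}n$ is a statement about the location of this integer maximiser.

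First I would record the shape of $f(\cdot,n)$. The sign computations in the proof of Proposition \ref{l05} already show $\partial f/\partial x>0$ on $[1,\tfrac{373}{1000}n]$ and $\partial f/\partial x<0$ on $[\tfrac{391}{1000}n,n-2]$; since $\partial f/\partial x$ is a cubic in $x$ with a single root in $(0,n-1)$, the function $f(\cdot,n)$ is unimodal on $[1,n-2]$, with its unique interior maximiser $x^\ast$ lying in $[\tfrac{373}{1000}n,\tfrac{391}{1000}n]\subset(\tfrac{3}{10}n,\tfrac{4}{10}n)$. In particular the maximising integer $k$ is one of $\lfloor x^\ast\rfloor,\lceil x^\ast\rceil$. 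Next I would transfer this to $k$ using the hypothesis that the gap $\tfrac{4}{10}n-\tfrac{3}{10}n=\tfrac{n}{10}$ exceeds $1$ for $n\ge11$. For the lower bound: as $f(\cdot,n)$ is strictly increasing throughout $[1,\tfrac{373}{1000}n]$ and $\tfrac{3}{10}n<\tfrac{373}{1000}n$, every integer $\le\tfrac{3}{10}n$ is dominated by a larger integer still inside the increasing range (the gap leaves room for such an integer in $(\tfrac{3}{10}n,\tfrac{4}{10}n)$, and for the few smallest $n$ the true peak, being near $0.39n$, still lies above these integers), so no such integer is maximal; hence $k>\tfrac{3}{10}n$. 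The upper inequality is argued symmetrically from the decreasing range.

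The main obstacle is the upper bound $k<\tfrac{4}{10}n$, because the continuous maximiser $x^\ast$ sits close to $\tfrac{4}{10}n$ (asymptotically near $0.390\,n$): the guaranteed decreasing range only begins at $\tfrac{391}{1000}n$, so for moderate $n$ the integer $\lceil x^\ast\rceil$ could a priori reach $\tfrac{4}{10}n$, and ruling this out requires comparing $f$ at $\lceil x^\ast\rceil$ with $f$ at the nearest integer below $\tfrac{4}{10}n$. I expect a two-regime split: once $\tfrac{4}{10}n-\tfrac{391}{1000}n=\tfrac{9}{1000}n>1$ the monotonicity argument closes the gap outright, while for the remaining finitely many $n$ one invokes the explicit determination of the maximising $k$ (the computation already carried out for $n\le149$).

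A cleaner, computer-free alternative, which I would prefer for full rigour, is to bypass the continuous-to-integer rounding entirely and work with the discrete difference $f(k+1,n)-f(k,n)$. This is a cubic in $k$ whose sign can be pinned down directly: showing it is positive for integers $k\le\tfrac{3}{10}n$ forces $k>\tfrac{3}{10}n$, and showing it is negative for integers $k\ge\tfrac{4}{10}n$ forces $k<\tfrac{4}{10}n$ (up to the harmless boundary ties that occur when $\tfrac{4}{10}n$ is near an integer). This reduces everything to estimating the sign of an explicit cubic on the two endpoint values, which is the discrete counterpart of the derivative estimates in Proposition \ref{l05} and sidesteps the delicate rounding at the upper endpoint.
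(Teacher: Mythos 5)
Your diagnosis of where the difficulty lies is sharper than the paper's own justification, which consists solely of the sentence preceding Corollary \ref{cor-Kr} (the gap $\tfrac{4}{10}n-\tfrac{3}{10}n>1$ plus the sign ranges of $\partial f/\partial x$ from the proof of Proposition \ref{l05}); your reduction to maximising $f(k,n)=k(n-k)\bigl((n-1)^2-k^2\bigr)$ over integers, your unimodality claim (the cubic $\partial f/\partial x$ is positive at $x=0$, so it has exactly one root in $[1,n-2]$), and your lower-bound argument are all sound. But the step you rely on to close the upper bound --- ``for the remaining finitely many $n$ one invokes the explicit determination'' --- fails, because the explicit determination \emph{refutes} rather than confirms the strict inequality. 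The paper's own Table \ref{Tab-1} lists $k=6,8,10,12$ for $n=15,20,25,30$, i.e.\ $k=\tfrac{4}{10}n$ exactly, and direct computation confirms this: for $n=20$, $f(8,20)=8\cdot12\cdot(19^2-8^2)=28512$, while $f(7,20)=7\cdot13\cdot(19^2-7^2)=28392$ and $f(9,20)=27720$, so the unique maximiser is $k=8=\tfrac{4}{10}\cdot20$; similarly $f(6,15)=8640>8550=f(5,15)$, $f(10,25)=71400>71280=f(9,25)$, and $f(12,30)=150552>150480=f(11,30)$. These are not the ``harmless boundary ties'' your last paragraph allows for: at these four orders the integer $\tfrac{4}{10}n$ is the strict argmax, so no sign analysis of the discrete difference $f(k+1,n)-f(k,n)$, however precise, can deliver $k<\tfrac{4}{10}n$ there.

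The conclusion to draw is that the defect is in the statement, not merely in your route to it: what is actually provable is $\tfrac{3}{10}n<k\le\tfrac{4}{10}n$, with equality precisely at $n\in\{15,20,25,30\}$. Your own two-regime scheme then works for the corrected statement --- the monotonicity of $f(\cdot,n)$ on $[\tfrac{391}{1000}n,\,n-2]$ forces strictness once $\tfrac{9}{1000}n>1$, i.e.\ $n\ge112$, and the tabulated values settle $11\le n\le111$. Note that the paper's one-sentence derivation commits exactly the rounding error at the upper endpoint that you flagged as the ``main obstacle,'' so your proposal is essentially the paper's argument with the hole made visible; the honest outcome of pursuing it is a counterexample to the corollary as stated (and a corresponding caveat wherever the strict bound is invoked, e.g.\ in the proof of Proposition \ref{l06}), not a completed proof of it.
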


As mentioned in the introduction section, solving Problem \ref{prob} is ``far from being an easy task'' \cite{Furtula-MATCH-25}. By restricting ourselves to $n$-order connected bidegreed graphs of maximum degree $n-1$, and by using computer software, we find the values of $k$ (see Table \ref{Tab-1}) for $5\le n\le 149$.
The sequence consisting of the values of $k$ given in Table \ref{Tab-1} (that is, {\small$2,2,3,3,3,4,4,4,5,5,6,6,6,7,7,8,8,8,9,9,10,\!10,\!10,\!11,\!11,\!12,\!12,\!12,\!13,\!13,\!13,\dots$}) does not exist in
``The On-Line Encyclopedia of Integer Sequences'' \cite{OLEIS}.

\begin{table}[ht!]
\begin{center}
\caption{The values of $k$ (the number of vertices of degree $n-1$) in $n$-order connected bidegreed graphs with maximum $cM_2$ for $5\le n \le 149$.}\label{Tab-1}
\begin{tabular}{|c|c|c|c|c|c|c|c|c|c|c|c|c|c|}
\hline
$n$ & $k$    &  & $n$ & $k$    &  & $n$ & $k$    &  & $n$ & $k$    &  & $n$ & $k$    \\ \hline
5   & \textbf{2}  &  & 34  & 13          &  & 63  & 24          &  & 92  & \textbf{36} &  & 121 & 47          \\ \cline{1-2} \cline{4-5} \cline{7-8} \cline{10-11} \cline{13-14}
6   & \textbf{2}  &  & 35  & 13          &  & 64  & \textbf{25} &  & 93  & \textbf{36} &  & 122 & 47          \\ \cline{1-2} \cline{4-5} \cline{7-8} \cline{10-11} \cline{13-14}
7   & 3           &  & 36  & \textbf{14} &  & 65  & \textbf{25} &  & 94  & 37          &  & 123 & \textbf{48} \\ \cline{1-2} \cline{4-5} \cline{7-8} \cline{10-11} \cline{13-14}
8   & 3           &  & 37  & \textbf{14} &  & 66  & 26          &  & 95  & 37          &  & 124 & \textbf{48} \\ \cline{1-2} \cline{4-5} \cline{7-8} \cline{10-11} \cline{13-14}
9   & 3           &  & 38  & 15          &  & 67  & 26          &  & 96  & 37          &  & 125 & 49          \\ \cline{1-2} \cline{4-5} \cline{7-8} \cline{10-11} \cline{13-14}
10  & 4           &  & 39  & 15          &  & 68  & 26          &  & 97  & 38          &  & 126 & 49          \\ \cline{1-2} \cline{4-5} \cline{7-8} \cline{10-11} \cline{13-14}
11  & 4           &  & 40  & 15          &  & 69  & \textbf{27} &  & 98  & 38          &  & 127 & 49          \\ \cline{1-2} \cline{4-5} \cline{7-8} \cline{10-11} \cline{13-14}
12  & 4           &  & 41  & \textbf{16} &  & 70  & \textbf{27} &  & 99  & 38          &  & 128 & \textbf{50} \\ \cline{1-2} \cline{4-5} \cline{7-8} \cline{10-11} \cline{13-14}
13  & \textbf{5}  &  & 42  & \textbf{16} &  & 71  & 28          &  & 100 & \textbf{39} &  & 129 & \textbf{50} \\ \cline{1-2} \cline{4-5} \cline{7-8} \cline{10-11} \cline{13-14}
14  & \textbf{5}  &  & 43  & 17          &  & 72  & 28          &  & 101 & \textbf{39} &  & 130 & 51          \\ \cline{1-2} \cline{4-5} \cline{7-8} \cline{10-11} \cline{13-14}
15  & 6           &  & 44  & 17          &  & 73  & 28          &  & 102 & 40          &  & 131 & 51          \\ \cline{1-2} \cline{4-5} \cline{7-8} \cline{10-11} \cline{13-14}
16  & 6           &  & 45  & 17          &  & 74  & 29          &  & 103 & 40          &  & 132 & 51          \\ \cline{1-2} \cline{4-5} \cline{7-8} \cline{10-11} \cline{13-14}
17  & 6           &  & 46  & \textbf{18} &  & 75  & 29          &  & 104 & 40          &  & 133 & \textbf{52} \\ \cline{1-2} \cline{4-5} \cline{7-8} \cline{10-11} \cline{13-14}
18  & \textbf{7}  &  & 47  & \textbf{18} &  & 76  & 29          &  & 105 & \textbf{41} &  & 134 & \textbf{52} \\ \cline{1-2} \cline{4-5} \cline{7-8} \cline{10-11} \cline{13-14}
19  & \textbf{7}  &  & 48  & 19          &  & 77  & \textbf{30} &  & 106 & \textbf{41} &  & 135 & 53          \\ \cline{1-2} \cline{4-5} \cline{7-8} \cline{10-11} \cline{13-14}
20  & 8           &  & 49  & 19          &  & 78  & \textbf{30} &  & 107 & 42          &  & 136 & 53          \\ \cline{1-2} \cline{4-5} \cline{7-8} \cline{10-11} \cline{13-14}
21  & 8           &  & 50  & 19          &  & 79  & 31          &  & 108 & 42          &  & 137 & 53          \\ \cline{1-2} \cline{4-5} \cline{7-8} \cline{10-11} \cline{13-14}
22  & 8           &  & 51  & \textbf{20} &  & 80  & 31          &  & 109 & 42          &  & 138 & 54          \\ \cline{1-2} \cline{4-5} \cline{7-8} \cline{10-11} \cline{13-14}
23  & \textbf{9}  &  & 52  & \textbf{20} &  & 81  & 31          &  & 110 & \textbf{43} &  & 139 & 54          \\ \cline{1-2} \cline{4-5} \cline{7-8} \cline{10-11} \cline{13-14}
24  & \textbf{9}  &  & 53  & 21          &  & 82  & \textbf{32} &  & 111 & \textbf{43} &  & 140 & 54          \\ \cline{1-2} \cline{4-5} \cline{7-8} \cline{10-11} \cline{13-14}
25  & 10          &  & 54  & 21          &  & 83  & \textbf{32} &  & 112 & 44          &  & 141 & \textbf{55} \\ \cline{1-2} \cline{4-5} \cline{7-8} \cline{10-11} \cline{13-14}
26  & 10          &  & 55  & 21          &  & 84  & 33          &  & 113 & 44          &  & 142 & \textbf{55} \\ \cline{1-2} \cline{4-5} \cline{7-8} \cline{10-11} \cline{13-14}
27  & 10          &  & 56  & 22          &  & 85  & 33          &  & 114 & 44          &  & 143 & 56          \\ \cline{1-2} \cline{4-5} \cline{7-8} \cline{10-11} \cline{13-14}
28  & \textbf{11} &  & 57  & 22          &  & 86  & 33          &  & 115 & 45          &  & 144 & 56          \\ \cline{1-2} \cline{4-5} \cline{7-8} \cline{10-11} \cline{13-14}
29  & \textbf{11} &  & 58  & 22          &  & 87  & \textbf{34} &  & 116 & 45          &  & 145 & 56          \\ \cline{1-2} \cline{4-5} \cline{7-8} \cline{10-11} \cline{13-14}
30  & 12          &  & 59  & \textbf{23} &  & 88  & \textbf{34} &  & 117 & 45          &  & 146 & \textbf{57} \\ \cline{1-2} \cline{4-5} \cline{7-8} \cline{10-11} \cline{13-14}
31  & 12          &  & 60  & \textbf{23} &  & 89  & 35          &  & 118 & \textbf{46} &  & 147 & \textbf{57} \\ \cline{1-2} \cline{4-5} \cline{7-8} \cline{10-11} \cline{13-14}
32  & 12          &  & 61  & 24          &  & 90  & 35          &  & 119 & \textbf{46} &  & 148 & 58          \\ \cline{1-2} \cline{4-5} \cline{7-8} \cline{10-11} \cline{13-14}
33  & 13          &  & 62  & 24          &  & 91  & 35          &  & 120 & 47          &  & 149 & 58          \\ \cline{1-2} \cline{4-5} \cline{7-8} \cline{10-11} \cline{13-14}
\end{tabular}
\end{center}
\end{table}

Next, we prove a lemma, which is needed to prove Conjecture \ref{conj} for $n$-order tridegreed connected graphs of maximum degree $n-1$.

\begin{lemma}\label{lem-5}
Let $x,y,z$ be real numbers such that $0\leq x\leq y\leq z$. Then for any nonnegative real numbers $a,b,c,$ the following inequality holds:
$$a(z^2-y^2)+b(z^2-x^2)+c(y^2-x^2)\leq \left(b+\max\{a,c\}\right)(z^2-x^2).$$
\end{lemma}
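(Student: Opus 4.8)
The plan is to reduce the inequality to a termwise comparison by means of a telescoping substitution that replaces the three differences of squares with nonnegative building blocks. Since $0 \le x \le y \le z$, I would set $P := z^2 - y^2$ and $Q := y^2 - x^2$, both of which are nonnegative, and note the key identity $z^2 - x^2 = P + Q$. The first step is then to rewrite the left-hand side entirely in terms of $P$ and $Q$: the term $a(z^2 - y^2)$ becomes $aP$, the term $b(z^2 - x^2)$ becomes $b(P+Q)$, and $c(y^2 - x^2)$ becomes $cQ$, so that the left-hand side equals $(a+b)P + (b+c)Q$. Likewise, the right-hand side becomes $(b + \max\{a,c\})(P+Q)$.

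With this rewriting in place, the inequality to be established is
$$(a+b)P + (b+c)Q \le (b + \max\{a,c\})(P+Q).$$
Expanding both sides and cancelling the common contribution $b(P+Q)$, this is equivalent to
$$aP + cQ \le \max\{a,c\}\,(P+Q).$$
I would finish by proving this last inequality coordinate by coordinate: writing $m = \max\{a,c\}$, one has $a \le m$ and $c \le m$, and since $P \ge 0$ and $Q \ge 0$ it follows that $aP \le mP$ and $cQ \le mQ$; adding these two bounds yields $aP + cQ \le m(P+Q)$, which is exactly what is needed.

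There is essentially no obstacle in this argument; the single idea is the telescoping substitution, after which the $\max$ bound is automatic. It is worth observing that the only hypothesis that genuinely enters the proof is the ordering $x \le y \le z$, which is what guarantees $P \ge 0$ and $Q \ge 0$ and thereby permits the final termwise comparison. In particular, the stated nonnegativity of $a$, $b$, and $c$ is not actually required for the inequality to hold; it is presumably retained in the statement only because these quantities will be nonnegative in the intended application to the tridegreed case.
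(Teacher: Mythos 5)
Your proof is correct and is essentially the paper's own argument: the paper likewise bounds $a$ and $c$ by $\max\{a,c\}$ against the nonnegative factors $z^2-y^2$ and $y^2-x^2$ and then telescopes $(z^2-y^2)+(y^2-x^2)=z^2-x^2$, which is exactly your $P$, $Q$ substitution in different notation. Your closing observation is also accurate --- the argument never uses the nonnegativity of $a$, $b$, $c$, only $x\leq y\leq z$.
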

\begin{proof}
Note that
\begin{align*}
a(z^2-y^2)+b(z^2-x^2)+c(y^2-x^2)
\le&\max\{a,c\}(z^2-y^2)+b(z^2-x^2)\\
&+\max\{a,c\}(y^2-x^2)\\
     =&(b+\max\{a,c\})(z^2-x^2).
\end{align*}
\end{proof}

Let $G$ be an $\ell$-degreed connected graph with degree set  $\{d_1,d_2,\cdots,d_\ell\}$ such that $d_1<d_2<\cdots<d_\ell$ and $\ell\ge2$. For every $i\in \{1,2,\cdots,\ell\}$, we define $V_i=\{u\in V(G): d_u(G)=d_i\}$ and $\nu_i=|V_i|$. Also, for every $i\in \{1,\cdots,t-1\}$ and  $j\in \{i+1,\cdots,t\}$, we define
$$a_{i,j}=\big|\{uv\in E(G): u\in V_i, v\in V_j\}\big|.$$
Then
\begin{equation}\label{eq1} cM_2(G)=\sum_{i=1}^{t-1}\sum_{j=i+1}^{t}a_{i,j}(d_j^2-d_i^2)\end{equation}

\begin{proposition}\label{l06}
If $G$ is a tridegreed connected $n$-order graph with degree set $\{d_1,d_2,d_3\}$ such that $d_1<d_2<d_3=n-1$ and $n\ge11$, then $$cM_2(G)< cM_2(K_{t}+\overline{K}_{n-t})$$
for some $t$ lying between $\frac{3}{10}n$ and $\frac{4}{10}n$.
\end{proposition}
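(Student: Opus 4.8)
The plan is to bound $cM_2(G)$ from above by collapsing the three degree classes into two via Lemma \ref{lem-5}, and then to recognize the resulting quantity as the value $g(s):=s(n-s)\bigl((n-1)^2-s^2\bigr)$ of a suitable bidegreed graph; by Proposition \ref{l05} one has $g(s)=cM_2\bigl(K_s+\overline{K}_{n-s}\bigr)$. Set $p=\nu_1$, $q=\nu_2$, $k=\nu_3=|M(G)|$, so $p+q+k=n$. Since every vertex of $V_3$ has degree $n-1$ and is therefore universal, $a_{1,3}=pk$ and $a_{2,3}=qk$, and every vertex of $V_1$ is adjacent to all $k$ vertices of $V_3$; hence $d_1\ge k$, and each $V_1$-vertex has at most $d_1-k$ neighbours in $V_2$, which yields the two bounds $a_{1,2}\le pq$ and $a_{1,2}\le p(d_1-k)$. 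Let $t$ be an integer in $[1,n-2]$ maximising $g$; by Corollary \ref{cor-Kr} we have $\frac{3}{10}n<t<\frac{4}{10}n$, and $g(s)\le g(t)=cM_2\bigl(K_t+\overline{K}_{n-t}\bigr)$ for every integer $s\in[1,n-2]$. It therefore suffices to prove $cM_2(G)<g(t)$.

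Starting from \eqref{eq1}, I would apply Lemma \ref{lem-5} with $x=d_1<y=d_2<z=n-1$ and $a=a_{2,3}$, $b=a_{1,3}$, $c=a_{1,2}$ to obtain
\[
cM_2(G)\le\bigl(a_{1,3}+\max\{a_{2,3},a_{1,2}\}\bigr)\bigl((n-1)^2-d_1^2\bigr),
\]
and I would record that, because $d_1<d_2<n-1$ holds strictly, equality in Lemma \ref{lem-5} forces $a_{1,2}=a_{2,3}$. The argument then splits on this maximum. When $a_{1,2}\le a_{2,3}$, the coefficient equals $pk+qk=k(n-k)$, so $cM_2(G)\le k(n-k)\bigl((n-1)^2-d_1^2\bigr)\le g(k)$ by $d_1\ge k$; strictness is immediate if $d_1>k$, while if $d_1=k$ then $a_{1,2}=0\neq a_{2,3}$, so the Lemma step itself is already strict. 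Either way $cM_2(G)<g(k)\le g(t)$.

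The remaining case $a_{1,2}>a_{2,3}$ is the main obstacle, and this is where the two separate bounds on $a_{1,2}$ must be used together. Note first that here the Lemma step is automatically strict. The coefficient is now $pk+a_{1,2}$, and the key idea is to combine the bounds as $a_{1,2}\le p\min\{q,\,d_1-k\}$, so that
\[
pk+a_{1,2}\le p\bigl(k+\min\{q,d_1-k\}\bigr)=p\min\{n-p,\,d_1\}.
\]
If $d_1\ge n-p$, this is at most $p(n-p)$, and since $(n-1)^2-d_1^2\le (n-1)^2-(n-p)^2$ we obtain $cM_2(G)<p(n-p)\bigl((n-1)^2-(n-p)^2\bigr)=g(n-p)$. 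If instead $d_1<n-p$, the coefficient is at most $pd_1$ with $p<n-d_1$, giving $cM_2(G)<(n-d_1)d_1\bigl((n-1)^2-d_1^2\bigr)=g(d_1)$.

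To close the argument I would verify that the arguments fed into $g$ really lie in $[1,n-2]$. Here $a_{1,2}\le pq$ together with $a_{1,2}>a_{2,3}=qk$ gives $p>k\ge1$, hence $p\ge2$ and $n-p=q+k\le n-2$; also $n-p\ge2$ and $k\le d_1\le n-2$, so $k,\,n-p,\,d_1$ all lie in $[1,n-2]$ and Corollary \ref{cor-Kr} applies. Since each displayed chain contains a strict inequality, in every case $cM_2(G)<g(s)\le g(t)=cM_2\bigl(K_t+\overline{K}_{n-t}\bigr)$ with $\frac{3}{10}n<t<\frac{4}{10}n$, as required. The only genuinely delicate point is the case $a_{1,2}>a_{2,3}$, where one must resist bounding $a_{1,2}$ by a single one of its two estimates and instead use the minimum, since either estimate alone is too weak to beat $g(t)$.
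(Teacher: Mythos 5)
Your proof is correct and takes essentially the same route as the paper: the same application of Lemma \ref{lem-5} with $(a,b,c)=(a_{2,3},a_{1,3},a_{1,2})$ and $(x,y,z)=(d_1,d_2,n-1)$, the same case split on $\max\{a_{2,3},a_{1,2}\}$ and then on $d_1$ versus $\nu_2+\nu_3$ (your $\min\{q,d_1-k\}$ packages exactly the paper's two bounds $a_{1,2}\le\nu_1\nu_2$ and $a_{1,2}\le\nu_1(d_1-\nu_3)$), and the same final appeal to Corollary \ref{cor-Kr}. The only cosmetic difference is the boundary case $a_{1,2}=a_{2,3}$: you absorb it into the first branch and obtain strictness from $d_1>k$ or the forced $a_{1,2}=0$, whereas the paper places it in its Case~2 and settles the triple-equality subcase via $\nu_1=\nu_3$ and $\nu_2+\nu_3>n/2$.
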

\begin{proof} By using \eqref{eq1}, we have
\begin{equation}\label{Trideg-eq}
  cM_2(G)=\nu_3\nu_2((n-1)^2-d_2^2)+\nu_3\nu_1((n-1)^2-d_1^2)+a_{1,2}(d_2^2-d_1^2).
\end{equation}
{\bf Case 1.} $\nu_3\nu_2>a_{1,2}$.\\
By utilizing Lemma \ref{lem-5} in Equation \eqref{Trideg-eq}, we obtain
\begin{equation}\label{Trideg-Eq-2}
cM_2(G)< \nu_3(\nu_1+\nu_2)((n-1)^2-d_1^2).
\end{equation}
Since every vertex of the set $V_3$ is adjacent to all vertices of $V_1$, we obtain $\nu_3=|V_3|\leq d_1$. Additionally, we have $\nu_1+\nu_2=n-\nu_3$. Thus, from \eqref{Trideg-Eq-2} it follows that $$cM_2(G)< \nu_3(n-\nu_3)((n-1)^2-\nu_3^2)=cM_2(K_{\nu_3}+\overline{K}_{n-\nu_3}),$$
which implies the desired inequality because of Corollary \ref{cor-Kr}.\\[2mm]
{\bf Case 2.} $\nu_3\nu_2 \le a_{1,2}$.\\
By using Lemma \ref{lem-5} in Equation \eqref{Trideg-eq}, we obtain
\begin{equation}\label{Trideg-Eq-3}
 cM_2(G)\leq (a_{1,2}+\nu_3\nu_1)((n-1)^2-d_1^2).
\end{equation}
{\bf Case 2.1.} $\nu_2+\nu_3\leq d_1$.\\
We note that $a_{1,2}\leq \nu_1\nu_2$. If any of the inequalities $\nu_3\nu_2 \le a_{1,2}$, $\nu_2+\nu_3\leq d_1$, and $a_{1,2}\leq \nu_1\nu_2$, is strict, then from  \eqref{Trideg-eq} and \eqref{Trideg-Eq-3} it follows that
\begin{align}\label{Trideg-Eq-3-a}
cM_2(G)&< \nu_1(\nu_2+\nu_3)((n-1)^2-(\nu_2+\nu_3)^2)\\
&=cM_2(K_{\nu_2+\nu_3}+\overline{K}_{n-\nu_2-\nu_3}).\nonumber
\end{align}
If $\nu_3\nu_2 = a_{1,2}$, $\nu_2+\nu_3= d_1$, and $a_{1,2}= \nu_1\nu_2$, then $\nu_1=\nu_3$ and hence $\nu_2+\nu_3> n/2$ (for otherwise, the inequality $\nu_2+\nu_3\le n/2$ gives $\nu_1 \ge n/2$, which gives $\nu_1+\nu_3\ge n/2+n/2$, a contradiction); therefore, from \eqref{Trideg-eq}, \eqref{Trideg-Eq-3}, and Corollary \ref{cor-Kr}, it follows that
\begin{align*}\label{Trideg-Eq-3-b}
cM_2(G)&= \nu_1(\nu_2+\nu_3)((n-1)^2-(\nu_2+\nu_3)^2)\\
&=cM_2(K_{\nu_2+\nu_3}+\overline{K}_{n-\nu_2-\nu_3})<cM_2(K_{t}+\overline{K}_{n-t}),\nonumber
\end{align*}
for some $t$ lying between $\frac{3}{10}n$ and $\frac{4}{10}n$.\\[2mm]
{\bf Case 2.2.}  $\nu_2+\nu_3 > d_1$.\\
In this case, we have $\nu_1=n-(\nu_2+\nu_3)<n-d_1$. {Now, for every $x\in V_1$, we have $d_x=|N_G(x)\cap V_1|+|N_G(x)\cap V_2|+|N_G(x)\cap V_3|$. Since $V_3\subseteq N_G(x)$, and $d_x=d_1$, we get $|N_G(x)\cap V_2|\leq d_1-\nu_3$; by summing this over all elements $x$ of $V_1$, we obtain $a_{1,2}\leq \nu_1(d_1-\nu_3)$}. Hence, from \eqref{Trideg-Eq-3} we obtain
\begin{equation}\label{Trideg-Eq-3d}
 cM_2(G)< (n-d_1)d_1((n-1)^2-d_1^2)=cM_2(K_{d_1}+\overline{K}_{n-d_1}).
\end{equation}
Now, \eqref{Trideg-Eq-3d} implies the desired inequality because of Corollary \ref{cor-Kr}.
\end{proof}

\acknowledgment
{This work is supported by the Scientific Research Deanship, University of Ha\!'il, Ha\!'il, Saudi Arabia, through project number RG-24\,059.}

\singlespacing

\end{document}